\numberwithin{equation}{section}
\numberwithin{figure}{section}
\theoremstyle{plain}
\newtheorem{thm}{Theorem}[section]
  \theoremstyle{remark}
  \newtheorem*{rem*}{Remark}
\newcommand\relphantom[1]{\mathrel{\phantom{#1}}}
\begin{document}

\title{ Some identities of Carlitz degenerate Bernoulli numbers and polynomials}

\author{Taekyun Kim}
\address{Department of Mathematics, Tianjin Polytechnic University,
Tianjin, China\\Department of Mathematics, Kwangwoon University,
Seoul 139-701, Republic of Korea} \email{tkkim@kw.ac.kr}

\author{Dae San Kim}
\address{Department of Mathematics, Sogang University, Seoul 121-742, Republic
of Korea}
\email{dskim@sogang.ac.kr}

\author{Hyuck-In Kwon}
\address{Department of Mathematics, Kwangwoon University, Seoul 139-701, Republic
of Korea} \email{sura@kw.ac.kr}

\keywords{Carlitz degenerate Bernoulli numbers and polynomials, degenerate Riemann zeta function}
\subjclass[2010]{11B68, 11B83, 11C08, 65D20, 65Q30, 65R20}

\begin{abstract}
In this paper, we study the Carlitz's degenerate Bernoulli numbers
and polynomials and give some formulae and identities related to those
numbers and polynomials.
\end{abstract}

\maketitle
\global\long\def\acl#1#2{\left\langle \left.#1\right|#2\right\rangle }
\global\long\def\acr#1#2{\left\langle #1\left|#2\right.\right\rangle }
\global\long\def\Li{\mathrm{Li}}
\global\long\def\Zp{\mathbb{Z}_{p}}

\section{Introduction}

As is well known, the ordinary Bernoulli polynomials are defined by
the generating function
\begin{equation}
\left(\frac{t}{e^{t}-1}\right)e^{xt}=\sum_{n=0}^{\infty}B_{n}
\left(x\right)\frac{t^{n}}{n!},\quad\left(\text{see
[1-20]}\right).\label{eq:1}
\end{equation}

When $x=0$, $B_{n}=B_{n}\left(0\right)$ are called the Bernoulli
numbers.

From (\ref{eq:1}), we note that
\begin{align}
B_{n}\left(x\right) & =\sum_{l=0}^{n}\binom{n}{l}B_{l}x^{n-l}\label{eq:2}\\
 & =d^{n-1}\sum_{a=0}^{d-1}B_{n}\left(\frac{a+x}{d}\right),\quad\left(n\ge0,\, d\in\mathbb{N}\right).\nonumber
\end{align}

It is easy to show that
\begin{equation}
\frac{t}{e^{t}-1}e^{t}-\frac{t}{e^{t}-1}=t.\label{eq:3}
\end{equation}

Thus, by (\ref{eq:1}) and (\ref{eq:3}), we get
\begin{equation}
\sum_{n=0}^{\infty}\left(B_{n}\left(1\right)-B_{n}\right)\frac{t^{n}}{n!}=t.\label{eq:4}
\end{equation}

By comparing the coefficients on the both sides, we have
\begin{equation}
B_{0}=1,\quad B_{n}\left(1\right)-B_{n}=\delta_{n,1},\quad\left(\text{see \cite{key-20}}\right),\label{eq:5}
\end{equation}
where $\delta_{n,k}$ is the Kronecker's symbol.

Let $\chi$ be a Dirichlet character with conductor $d\in\mathbb{N}$.
Then, the generalized Bernoulli numbers attached to $\chi$ are defined
by the generating function
\begin{equation}
\frac{t}{e^{dt}-1}\sum_{a=0}^{d-1}\chi\left(a\right)e^{at}=\sum_{n=0}^{\infty}B_{n,\chi}\frac{t^{n}}{n!},\quad\left(\text{see \cite{key-10,key-14,key-20}}\right).\label{eq:6}
\end{equation}

Thus, by (\ref{eq:6}), we get
\[
B_{n,\chi}=d^{n-1}\sum_{a=0}^{d-1}\chi\left(a\right)B_{n}\left(\frac{a}{d}\right).
\]

For $\lambda\in\mathbb{C}$, L. Carlitz defined the degenerate Bernoulli
polynomials as follows:
\begin{equation}
\frac{t}{\left(1+\lambda t\right)^{\frac{1}{\lambda}}-1}\left(1+\lambda t\right)^{\frac{x}{\lambda}}=\sum_{n=0}^{\infty}\beta_{n}\left(x\mid\lambda\right)\frac{t^{n}}{n!},\quad\left(\text{see \cite{key-3,key-4}}\right).\label{eq:7}
\end{equation}

When $x=0$, $\beta_{n}\left(\lambda\right)=\beta_{n}\left(0\mid\lambda\right)$
are called the degenerate Bernoulli numbers.

By (\ref{eq:7}), we easily get
\begin{align}
 & \relphantom{=}\sum_{n=0}^{\infty}\lim_{\lambda\rightarrow0}\beta_{n}\left(x\mid\lambda\right)\frac{t^{n}}{n!}\label{eq:8}\\
 & =\lim_{\lambda\rightarrow0}\frac{t}{\left(1+\lambda t\right)^{\frac{1}{\lambda}}-1}\left(1+\lambda t\right)^{\frac{x}{\lambda}}\nonumber \\
 & =\frac{t}{e^{t}-1}e^{xt}\nonumber \\
 & =\sum_{n=0}^{\infty}B_{n}\left(x\right)\frac{t^{n}}{n!}.\nonumber
\end{align}

Thus, by (\ref{eq:8}), we see
\[
\lim_{\lambda\rightarrow0}\beta_{n}\left(x\mid\lambda\right)=B_{n}\left(x\right),\quad\left(n\ge0\right).
\]

In this paper, we study the properties of degenerate Bernoulli numbers
and polynomials and give some formulae and identities related to those
numbers and polynomials.

\section{Degenerate Bernoulli numbers and polynomials}

We easily see that
\begin{equation}
\frac{t}{\left(1+\lambda t\right)^{\frac{1}{\lambda}}-1}\left(1+\lambda t\right)^{\frac{1}{\lambda}}-\frac{t}{\left(1+\lambda t\right)^{\frac{1}{\lambda}}-1}=t.\label{eq:9}
\end{equation}

From (\ref{eq:7}) and (\ref{eq:9}), we have
\begin{equation}
\sum_{n=0}^{\infty}\left\{ \beta_{n}\left(1\mid\lambda\right)-\beta_{n}\left(\lambda\right)\right\} \frac{t^{n}}{n!}=t.\label{eq:10}
\end{equation}

By comparing the coefficients on the both sides of (\ref{eq:10}),
we get
\begin{equation}
\beta_{n}\left(1\mid\lambda\right)-\beta_{n}\left(\lambda\right)=\delta_{1,n},\quad\beta_{0}\left(\lambda\right)=1,\quad\left(n\in\mathbb{N}\right).\label{eq:11}
\end{equation}

Note that equation (\ref{eq:11}) is the $\lambda$-analogue of (\ref{eq:5}).

From (\ref{eq:7}), we can derive the following equation:
\begin{align}
t\left(1+\lambda t\right)^{\frac{x}{\lambda}} & =\left(\left(1+\lambda t\right)^{\frac{1}{\lambda}}-1\right)\sum_{m=0}^{\infty}\beta_{m}\left(x\mid\lambda\right)\frac{t^{m}}{m!}\label{eq:12}\\
 & =\left(\sum_{l=1}^{\infty}\left(1\mid\lambda\right)_{l}\frac{t^{l}}{l!}\right)\left(\sum_{m=0}^{\infty}\beta_{m}\left(x\mid\lambda\right)\frac{t^{m}}{m!}\right),\nonumber
\end{align}
where
\[
\left(x\mid\lambda\right)_{n}=x\left(x-\lambda\right)\cdots\left(x-\lambda\left(n-1\right)\right)=\lambda^{n}\left(\frac{x}{\lambda}\right)_{n}=\lambda^{n}\sum_{l=0}^{n}S_{1}\left(n,l\right)\lambda^{-l}x^{l}.
\]

Thus, by (\ref{eq:12}), we get
\begin{equation}
\sum_{n=0}^{\infty}\left(x\mid\lambda\right)_{n}\frac{t^{n}}{n!}=\sum_{n=0}^{\infty}\left(\sum_{l=0}^{n}\frac{\left(1\mid\lambda\right)_{l+1}}{l+1}\binom{n}{l}\beta_{n-l}\left(x\mid\lambda\right)\right)\frac{t^{n}}{n!}.\label{eq:13}
\end{equation}

By comparing the coefficients on both sides of (\ref{eq:13}),
we get
\begin{equation}
\left(x\mid\lambda\right)_{n}=\sum_{l=0}^{n}\frac{\left(1\mid\lambda\right)_{l+1}}{l+1}\binom{n}{l}\beta_{n-l}\left(x\mid\lambda\right),\quad\left(n\ge0\right).\label{eq:14}
\end{equation}

Note that

\[
x^{n}=\lim_{\lambda\rightarrow0}\left(x\mid\lambda\right)_{n}=\sum_{l=0}^{n}\binom{n}{l}\frac{B_{n-l}\left(x\right)}{l+1}.
\]

On the other hand,
\begin{align}
\sum_{n=0}^{\infty}\beta_{n}\left(x\mid\lambda\right)\frac{t^{n}}{n!} & =\left(\frac{t}{\left(1+\lambda t\right)^{\frac{1}{\lambda}}-1}\right)\left(1+\lambda t\right)^{\frac{x}{\lambda}}\label{eq:15}\\
 & =\left(\sum_{l=0}^{\infty}\beta_{l}\left(\lambda\right)\frac{t^{l}}{l!}\right)\left(\sum_{m=0}^{\infty}\left(x\mid\lambda\right)_{m}\frac{t^{m}}{m!}\right)\nonumber \\
 & =\sum_{n=0}^{\infty}\left(\sum_{l=0}^{n}\beta_{l}\left(\lambda\right)\binom{n}{l}\left(x\mid\lambda\right)_{n-l}\right)\frac{t^{n}}{n!}\nonumber
\end{align}

By comparing the coefficients on both sides of (\ref{eq:15}),
we have
\begin{equation}
\beta_{n}\left(x\mid\lambda\right)=\sum_{l=0}^{n}\binom{n}{l}\beta_{l}\left(\lambda\right)\left(x\mid\lambda\right)_{n-l},\quad\left(n\ge0\right).\label{eq:16}
\end{equation}

Therefore, by (\ref{eq:11}), (\ref{eq:14}) and (\ref{eq:16}), we
obtain the following theorem.
\begin{thm}
\label{thm:1} For $n\ge0$, we have
\begin{align*}
\beta_{n}\left(x\mid\lambda\right) & =\sum_{l=0}^{n}\binom{n}{l}\beta_{l}\left(\lambda\right)\left(x\mid\lambda\right)_{n-l},\\
\left(x\mid\lambda\right)_{n} & =\sum_{l=0}^{n}\frac{\left(1\mid\lambda\right)_{l+1}}{l+1}\binom{n}{l}\beta_{n-l}\left(x\mid\lambda\right),
\end{align*}
and
\[
\beta_{0}\left(\lambda\right)=1,\quad\beta_{n}\left(1\mid\lambda\right)-\beta_{n}\left(\lambda\right)=\delta_{1,n}.
\]

\end{thm}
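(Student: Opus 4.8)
The plan is to observe that the three groups of relations in Theorem~\ref{thm:1} are exactly \eqref{eq:16}, \eqref{eq:14}, and \eqref{eq:11}, each of which falls out of the single defining generating function \eqref{eq:7} by a one-line manipulation followed by comparison of the coefficients of $t^{n}/n!$; no ingredient beyond careful bookkeeping is needed.

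First I would settle the last two relations. The elementary telescoping identity \eqref{eq:9} — obtained by writing $\frac{t}{(1+\lambda t)^{1/\lambda}-1}\big((1+\lambda t)^{1/\lambda}-1\big)=t$ — has the effect of turning the difference of two copies of the generating function \eqref{eq:7}, one evaluated at $x=1$ and one at $x=0$, into the bare series $t$; this is \eqref{eq:10}. Matching the coefficient of $t^{n}/n!$ on both sides then gives $\beta_{0}(\lambda)=1$ (from $n=0$) and $\beta_{n}(1\mid\lambda)-\beta_{n}(\lambda)=\delta_{1,n}$ (from $n\ge1$), i.e.\ \eqref{eq:11}.

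For the first identity I would split \eqref{eq:7} as a Cauchy product of $\frac{t}{(1+\lambda t)^{1/\lambda}-1}=\sum_{l}\beta_{l}(\lambda)t^{l}/l!$ with $(1+\lambda t)^{x/\lambda}=\sum_{m}(x\mid\lambda)_{m}t^{m}/m!$ (the latter being just the binomial series, which also explains the stated closed form of $(x\mid\lambda)_{n}$ in terms of the Stirling numbers $S_{1}$), and read off the coefficient of $t^{n}/n!$ to obtain \eqref{eq:16}. For the middle identity I would instead clear the denominator in \eqref{eq:7}, namely $t(1+\lambda t)^{x/\lambda}=\big((1+\lambda t)^{1/\lambda}-1\big)\sum_{m}\beta_{m}(x\mid\lambda)t^{m}/m!$ with $(1+\lambda t)^{1/\lambda}-1=\sum_{l\ge1}(1\mid\lambda)_{l}t^{l}/l!$. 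Cancelling one power of $t$ — equivalently shifting $l\mapsto l+1$, which rewrites the first right-hand factor as $t\sum_{l\ge0}\frac{(1\mid\lambda)_{l+1}}{(l+1)!}t^{l}$ — leaves $(1+\lambda t)^{x/\lambda}$ on the left; expanding the resulting product and using $(l+1)!=(l+1)\,l!$ to produce the weight $\frac{1}{l+1}$ and the binomial coefficient $\binom{n}{l}$, then comparing coefficients of $t^{n}/n!$, yields \eqref{eq:14}.

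The computations are all routine; the only place that needs attention is the index shift in the middle identity, where one must check that after the cancellation the formula holds for every $n\ge0$ and not merely for $n\ge1$ (the case $n=0$ reads $1=1$). Assembling \eqref{eq:11}, \eqref{eq:14}, and \eqref{eq:16} then completes the proof.
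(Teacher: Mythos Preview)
Your proposal is correct and follows essentially the same route as the paper: the paper derives \eqref{eq:11} from the telescoping identity \eqref{eq:9}--\eqref{eq:10}, obtains \eqref{eq:14} by clearing the denominator in \eqref{eq:7} and performing the index shift $l\mapsto l+1$, and gets \eqref{eq:16} by writing \eqref{eq:7} as the Cauchy product of the generating functions for $\beta_{l}(\lambda)$ and $(x\mid\lambda)_{m}$, exactly as you outline. The only cosmetic difference is the order of presentation; your added remark verifying the $n=0$ case of \eqref{eq:14} is a nice touch the paper omits.
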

From (\ref{eq:7}), we can derive the following equation:
\begin{align}
 & \relphantom{=}{}\frac{t}{\left(1+\lambda t\right)^{\frac{1}{\lambda}}-1}\left(1+\lambda t\right)^{\frac{x}{\lambda}}\label{eq:17}\\
 & =\frac{t}{\left(1+\lambda t\right)^{d/\lambda}-1}\sum_{a=0}^{d-1}\left(1+\lambda t\right)^{\frac{a+x}{\lambda}}\nonumber \\
 & =\frac{1}{d}\left(\frac{dt}{\left(1+\frac{\lambda}{d}dt\right)^{\frac{d}{\lambda}}-1}\right)\sum_{a=0}^{d-1}\left(1+\frac{\lambda}{d}dt\right)^{\frac{d}{\lambda}\frac{a+x}{d}}\nonumber \\
 & =\frac{1}{d}\sum_{a=0}^{d-1}\sum_{n=0}^{\infty}d^{n}\beta_{n}\left(\left.\frac{a+x}{d}\right|\frac{\lambda}{d}\right)\frac{t^{n}}{n!}\nonumber \\
 & =\sum_{n=0}^{\infty}\left\{ d^{n-1}\sum_{a=0}^{d-1}\beta_{n}\left(\left.\frac{a+x}{d}\right|\frac{\lambda}{d}\right)\right\} \frac{t^{n}}{n!},\quad\left(d\in\mathbb{N}\right).\nonumber
\end{align}

Therefore, by (\ref{eq:7}) and (\ref{eq:17}), we obtain the following
theorem.
\begin{thm}
\label{thm:2} For $n\ge0$, $d\in\mathbb{N}$, we have
\[
\beta_{n}\left(x\mid\lambda\right)=d^{n-1}\sum_{a=0}^{d-1}\beta_{n}\left(\left.\frac{a+x}{d}\right|\frac{\lambda}{d}\right).
\]
\end{thm}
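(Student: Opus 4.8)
The plan is to prove the identity at the level of generating functions and then compare coefficients of $t^{n}/n!$, in the same spirit as the classical multiplication formula recorded in the second line of \eqref{eq:2}. Set $u=u(t)=\left(1+\lambda t\right)^{1/\lambda}$ and start from the elementary factorization
\[
\frac{1}{u-1}=\frac{u^{d-1}+u^{d-2}+\cdots+1}{u^{d}-1}=\frac{1}{u^{d}-1}\sum_{a=0}^{d-1}u^{a},
\]
which holds for every $d\in\mathbb{N}$. Multiplying through by $t\,u^{x}$ rewrites the defining generating function \eqref{eq:7} as
\[
\frac{t}{\left(1+\lambda t\right)^{1/\lambda}-1}\left(1+\lambda t\right)^{x/\lambda}=\frac{t}{\left(1+\lambda t\right)^{d/\lambda}-1}\sum_{a=0}^{d-1}\left(1+\lambda t\right)^{(a+x)/\lambda}.
\]

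Next I would perform the rescaling $s=dt$, $\mu=\lambda/d$, under which $1+\lambda t=1+\mu s$, $\left(1+\lambda t\right)^{d/\lambda}=\left(1+\mu s\right)^{1/\mu}$, and $\left(1+\lambda t\right)^{(a+x)/\lambda}=\left(1+\mu s\right)^{\frac{1}{\mu}\cdot\frac{a+x}{d}}$. Since $t=s/d$, the right-hand side above becomes
\[
\frac{1}{d}\sum_{a=0}^{d-1}\frac{s}{\left(1+\mu s\right)^{1/\mu}-1}\left(1+\mu s\right)^{\frac{1}{\mu}\cdot\frac{a+x}{d}},
\]
and by the definition \eqref{eq:7}, applied with parameter $\mu=\lambda/d$ and with $\frac{a+x}{d}$ in place of $x$, the $a$-th summand equals $\sum_{n\ge0}\beta_{n}\!\left(\left.\frac{a+x}{d}\right|\frac{\lambda}{d}\right)\frac{s^{n}}{n!}$. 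Substituting back $s=dt$ contributes a factor $d^{n}$ to the coefficient of $t^{n}/n!$, which combines with the prefactor $1/d$ to give $d^{n-1}$.

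Putting the three steps together yields
\[
\sum_{n=0}^{\infty}\beta_{n}\left(x\mid\lambda\right)\frac{t^{n}}{n!}=\sum_{n=0}^{\infty}\left\{ d^{\,n-1}\sum_{a=0}^{d-1}\beta_{n}\!\left(\left.\frac{a+x}{d}\right|\frac{\lambda}{d}\right)\right\} \frac{t^{n}}{n!},
\]
and comparing the coefficients of $t^{n}/n!$ on both sides gives the asserted identity. I do not expect a genuine obstacle here: every manipulation is formal, the objects involved being well-defined formal power series in $t$ (equivalently, convergent for $|t|$ small), so all the rearrangements are legitimate. The one place that calls for a bit of care is the bookkeeping in the rescaling step — one must track the substitution $t\mapsto dt$ together with $\lambda\mapsto\lambda/d$ consistently, so that the $d^{n}$ produced by the change of variable and the $1/d$ in front combine to exactly $d^{n-1}$; this is precisely the content of the displayed computation \eqref{eq:17}.
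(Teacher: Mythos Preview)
Your proposal is correct and follows essentially the same route as the paper: you use the geometric-series factorization of $1/((1+\lambda t)^{1/\lambda}-1)$ in terms of $1/((1+\lambda t)^{d/\lambda}-1)$, then rescale via $s=dt$, $\mu=\lambda/d$ to recognize the defining series \eqref{eq:7} with shifted argument and parameter, and finally compare coefficients---exactly the computation displayed in \eqref{eq:17}. The only cosmetic difference is that you isolate the factorization $\frac{1}{u-1}=\frac{1}{u^{d}-1}\sum_{a=0}^{d-1}u^{a}$ explicitly before substituting, whereas the paper writes the first two lines of \eqref{eq:17} without naming this step.
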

\begin{rem*}
Theorem (\ref{thm:2}) is the $\lambda$-analogue of (\ref{eq:2}).
That is,
\[
B_{n}\left(x\right)=\lim_{\lambda\rightarrow0}\beta_{n}\left(x\mid\lambda\right)=d^{n-1}\sum_{a=0}^{d-1}B_{n}\left(\frac{a+x}{d}\right),\quad\left(d\in\mathbb{N}\right).
\]

\end{rem*}
We observe that
\begin{align}
 & \relphantom{=}{}\frac{t}{\left(1+\lambda t\right)^{\frac{1}{\lambda}}-1}\left(1+\lambda t\right)^{\frac{n}{\lambda}}-\frac{t}{\left(1+\lambda t\right)^{\frac{1}{\lambda}}-1}\label{eq:18}\\
 & =\frac{t}{\left(1+\lambda t\right)^{\frac{1}{\lambda}}-1}\left(\left(1+\lambda t\right)^{\frac{n}{\lambda}}-1\right)\nonumber \\
 & =\frac{t}{\left(1+\lambda t\right)^{\frac{1}{\lambda}}-1}\left(\left(1+\lambda t\right)^{\frac{1}{\lambda}}-1\right)\left(1+\left(1+\lambda t\right)^{\frac{1}{\lambda}}+\cdots+\left(1+\lambda t\right)^{\frac{n-1}{\lambda}}\right)\nonumber \\
 & =t\sum_{l=0}^{n-1}\left(1+\lambda t\right)^{\frac{l}{\lambda}}\nonumber \\
 & =t\sum_{m=0}^{\infty}\left(\sum_{l=0}^{n-1}\left(l\mid\lambda\right)_{m}\right)\frac{t^{m}}{m!},\quad\left(n\in\mathbb{N}\right).\nonumber
\end{align}

On the other hand,
\begin{align}
\frac{t}{\left(1+\lambda t\right)^{\frac{1}{\lambda}}-1}\left(1+\lambda t\right)^{\frac{n}{\lambda}}-\frac{t}{\left(1+\lambda t\right)^{\frac{1}{\lambda}}-1} & =\sum_{m=0}^{\infty}\left\{ \beta_{m}\left(n\mid\lambda\right)-\beta_{m}\left(\lambda\right)\right\} \frac{t^{m}}{m!}\label{eq:19}\\
 & =t\sum_{m=0}^{\infty}\left\{ \frac{\beta_{m+1}\left(n\mid\lambda\right)-\beta_{m+1}\left(\lambda\right)}{m+1}\right\} \frac{t^{m}}{m!}.\nonumber
\end{align}

By (\ref{eq:18}) and (\ref{eq:19}), we get
\begin{equation}
\sum_{m=0}^{\infty}\left(\sum_{l=0}^{n-1}\left(l\mid\lambda\right)_{m}\right)\frac{t^{m}}{m!}=\sum_{m=0}^{\infty}\left(\frac{\beta_{m+1}\left(n\mid\lambda\right)-\beta_{m+1}\left(\lambda\right)}{m+1}\right)\frac{t^{m}}{m!},\quad\left(n\in\mathbb{N}\right).\label{eq:20}
\end{equation}

Therefore, by comparing the coefficients on both sides of (\ref{eq:20}),
we obtain the following theorem.
\begin{thm}
\label{thm:3} For $n\in\mathbb{N}$, $m\ge0$, we have
\[
\sum_{l=0}^{n-1}\left(l\mid\lambda\right)_{m}=\frac{1}{m+1}\left\{ \beta_{m+1}\left(n\mid\lambda\right)-\beta_{m+1}\left(\lambda\right)\right\} .
\]

\end{thm}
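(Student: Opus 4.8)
The plan is to introduce the auxiliary generating function
\[
F(t)=\frac{t}{\left(1+\lambda t\right)^{1/\lambda}-1}\left(\left(1+\lambda t\right)^{n/\lambda}-1\right),
\]
to evaluate it in two different ways, and then to compare coefficients of $t^{m}/m!$. For the first evaluation I would regard $u=\left(1+\lambda t\right)^{1/\lambda}$ as a formal power series in $t$ with constant term $1$, so that $\left(1+\lambda t\right)^{k/\lambda}=u^{k}$ for each integer $k\ge0$, and use the finite factorization $u^{n}-1=(u-1)\left(1+u+\cdots+u^{n-1}\right)$, which is available precisely because $n\in\mathbb{N}$. The factor $u-1=\left(1+\lambda t\right)^{1/\lambda}-1$ then cancels the denominator and leaves $F(t)=t\sum_{l=0}^{n-1}\left(1+\lambda t\right)^{l/\lambda}$; expanding each summand by the binomial series $\left(1+\lambda t\right)^{l/\lambda}=\sum_{m\ge0}\left(l\mid\lambda\right)_{m}t^{m}/m!$ gives $F(t)=t\sum_{m\ge0}\bigl(\sum_{l=0}^{n-1}\left(l\mid\lambda\right)_{m}\bigr)t^{m}/m!$. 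This is the content of \eqref{eq:18}.

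For the second evaluation I would split $F$ as
\[
F(t)=\frac{t}{\left(1+\lambda t\right)^{1/\lambda}-1}\left(1+\lambda t\right)^{n/\lambda}-\frac{t}{\left(1+\lambda t\right)^{1/\lambda}-1}
\]
and apply the defining relation \eqref{eq:7} to each term, with $x=n$ in the first and $x=0$ in the second, to obtain $F(t)=\sum_{m\ge0}\left\{\beta_{m}(n\mid\lambda)-\beta_{m}(\lambda)\right\}t^{m}/m!$. Since $\beta_{0}(n\mid\lambda)=\beta_{0}(\lambda)=1$ by \eqref{eq:11}, the constant term vanishes, so I can shift the index and factor out $t$, getting $F(t)=t\sum_{m\ge0}\frac{\beta_{m+1}(n\mid\lambda)-\beta_{m+1}(\lambda)}{m+1}\frac{t^{m}}{m!}$, which is \eqref{eq:19}. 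Cancelling the common factor $t$ in the two expressions for $F$ and comparing the coefficients of $t^{m}/m!$ (equivalently, reading off \eqref{eq:20}) yields $\sum_{l=0}^{n-1}\left(l\mid\lambda\right)_{m}=\frac{1}{m+1}\left\{\beta_{m+1}(n\mid\lambda)-\beta_{m+1}(\lambda)\right\}$, as claimed.

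I expect the only delicate point to be the formal bookkeeping around $\left(1+\lambda t\right)^{1/\lambda}$: one should check that the exponent law $\left(1+\lambda t\right)^{a/\lambda}\left(1+\lambda t\right)^{b/\lambda}=\left(1+\lambda t\right)^{(a+b)/\lambda}$ is valid at the level of formal power series for the integer exponents that appear, so that the geometric-sum factorization of $u^{n}-1$ is legitimate, and that $\frac{t}{(1+\lambda t)^{1/\lambda}-1}$ really is a power series — its denominator has order $t$, so the quotient has constant term $1$ — so that dividing out and re-inserting the factor $t$ causes no trouble. Granting these routine facts, the binomial expansion, the index shift, and the final coefficient comparison are all immediate.
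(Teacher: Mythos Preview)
Your argument is correct and follows the paper's proof essentially line for line: the same auxiliary expression is evaluated first via the geometric-sum factorization of $(1+\lambda t)^{n/\lambda}-1$ (your equation is exactly \eqref{eq:18}) and then via the generating function \eqref{eq:7} with an index shift (exactly \eqref{eq:19}), after which coefficient comparison gives the result. Your added remarks on the formal validity of the exponent law and the invertibility of $\bigl((1+\lambda t)^{1/\lambda}-1\bigr)/t$ are sound and merely make explicit what the paper leaves implicit.
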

By replacing $t$ by $\frac{1}{\lambda}\log\left(1+\lambda t\right)$
in (\ref{eq:1}), we get
\begin{align}
 & \relphantom{=}{}\frac{\log\left(1+\lambda t\right)^{\frac{1}{\lambda}}}{\left(1+\lambda t\right)^{\frac{1}{\lambda}}-1}\left(1+\lambda t\right)^{\frac{x}{\lambda}}\label{eq:21}\\
 & =\sum_{n=0}^{\infty}B_{n}\left(x\right)\lambda^{-n}\frac{1}{n!}\left(\log\left(1+\lambda t\right)\right)^{n}\nonumber \\
 & =\sum_{n=0}^{\infty}\left(\sum_{m=0}^{n}B_{m}\left(x\right)\lambda^{n-m}S_{1}\left(n,m\right)\right)\frac{t^{n}}{n!}.\nonumber
\end{align}

On the other hand,
\begin{align}
 & \relphantom{=}{}\frac{\log\left(1+\lambda t\right)^{\frac{1}{\lambda}}}{\left(1+\lambda t\right)^{\frac{1}{\lambda}}-1}\left(1+\lambda t\right)^{\frac{x}{\lambda}}\label{eq:22}\\
 & =\left(\frac{\log\left(1+\lambda t\right)}{\lambda t}\right)\left(\frac{t}{\left(1+\lambda t\right)^{\frac{1}{\lambda}}-1}\left(1+\lambda t\right)^{\frac{x}{\lambda}}\right)\nonumber \\
 & =\left(\sum_{l=0}^{\infty}\frac{\left(-1\right)^{l}}{l+1}\lambda^{l}t^{l}\right)\left(\sum_{m=0}^{\infty}\beta_{m}\left(x\mid\lambda\right)\frac{t^{m}}{m!}\right)\nonumber \\
 & =\sum_{n=0}^{\infty}\left(\sum_{l=0}^{n}\frac{\left(-1\right)^{l}\lambda^{l}}{l+1}\frac{\beta_{n-l}\left(x\mid\lambda\right)n!}{\left(n-l\right)!}\right)\frac{t^{n}}{n!}\nonumber \\
 & =\sum_{n=0}^{\infty}\left(\sum_{l=0}^{n}\frac{l!}{l+1}\left(-1\right)^{l}\lambda^{l}\binom{n}{l}\beta_{n-l}\left(x\mid\lambda\right)\right)\frac{t^{n}}{n!}.\nonumber
\end{align}

Therefore, by (\ref{eq:21}) and (\ref{eq:22}), we obtain the following
theorem.
\begin{thm}
\label{thm:4} For $n\ge0$, we have
\[
\sum_{m=0}^{n}B_{m}\left(x\right)\lambda^{n-m}S_{1}\left(n,m\right)=\sum_{l=0}^{n}\frac{l!}{l+1}\binom{n}{l}\left(-1\right)^{l}\lambda^{l}\beta_{n-l}\left(x\mid\lambda\right),
\]
where $S_{1}\left(n,m\right)$ is the Stirling number of the first
kind.
\end{thm}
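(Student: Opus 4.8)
The plan is to compute the single generating function
\[
F(t)=\frac{\frac{1}{\lambda}\log\left(1+\lambda t\right)}{\left(1+\lambda t\right)^{1/\lambda}-1}\left(1+\lambda t\right)^{x/\lambda}
\]
in two ways and equate coefficients of $t^{n}/n!$. This $F(t)$ is exactly what one obtains from the ordinary Bernoulli generating function \eqref{eq:1} after the substitution $t\mapsto\frac{1}{\lambda}\log(1+\lambda t)$, under which $e^{t}\mapsto(1+\lambda t)^{1/\lambda}$; so the whole proof reduces to expanding $F(t)$ in two different ways.

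First I would make that substitution in \eqref{eq:1} verbatim. The right-hand side becomes $\sum_{n\ge0}B_{n}(x)\lambda^{-n}\frac{1}{n!}\bigl(\log(1+\lambda t)\bigr)^{n}$, and here I invoke the classical generating function of the Stirling numbers of the first kind, $\frac{1}{m!}\bigl(\log(1+u)\bigr)^{m}=\sum_{n\ge m}S_{1}(n,m)\frac{u^{n}}{n!}$, applied with $u=\lambda t$. Interchanging the two sums and reading off the coefficient of $t^{n}/n!$ expresses the left-hand side of the theorem as the $n$-th coefficient of $F(t)$; this is equation \eqref{eq:21}.

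Next I would factor $F(t)=\bigl(\frac{\log(1+\lambda t)}{\lambda t}\bigr)\bigl(\frac{t}{(1+\lambda t)^{1/\lambda}-1}(1+\lambda t)^{x/\lambda}\bigr)$. The second factor is the Carlitz generating function \eqref{eq:7}, hence equals $\sum_{m\ge0}\beta_{m}(x\mid\lambda)\frac{t^{m}}{m!}$; for the first factor I use $\log(1+\lambda t)=\sum_{l\ge1}\frac{(-1)^{l-1}}{l}\lambda^{l}t^{l}$, so $\frac{\log(1+\lambda t)}{\lambda t}=\sum_{l\ge0}\frac{(-1)^{l}}{l+1}\lambda^{l}t^{l}$. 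Forming the Cauchy product of these two series and collecting the coefficient of $t^{n}/n!$ yields the right-hand side of the theorem; this is equation \eqref{eq:22}. Comparing the two expressions for the $n$-th coefficient of $F(t)$ completes the proof.

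There is no real obstacle here; the only point needing care is the bookkeeping in the last product: one must rewrite the ordinary power series $\sum_{l}\frac{(-1)^{l}}{l+1}\lambda^{l}t^{l}$ as the exponential-generating series $\sum_{l}\frac{l!}{l+1}(-1)^{l}\lambda^{l}\frac{t^{l}}{l!}$ before applying the Cauchy product, which is exactly what produces the factors $\frac{l!}{l+1}$ and $\binom{n}{l}$ in the final formula. As a consistency check one may let $\lambda\to0$: on the left only the $m=n$ term survives (since $S_{1}(n,m)$ is weighted by $\lambda^{n-m}$ and $S_{1}(n,n)=1$), on the right only the $l=0$ term survives, and both sides reduce to the identity $B_{n}(x)=B_{n}(x)$.
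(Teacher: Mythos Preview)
Your proposal is correct and follows essentially the same approach as the paper: compute the generating function obtained from \eqref{eq:1} under the substitution $t\mapsto\frac{1}{\lambda}\log(1+\lambda t)$ in two ways---once via the Stirling-number expansion of $\bigl(\log(1+\lambda t)\bigr)^{m}$, and once by factoring out $\frac{\log(1+\lambda t)}{\lambda t}$ and using \eqref{eq:7}---then equate coefficients; this is exactly the derivation in \eqref{eq:21}--\eqref{eq:22}. Your added $\lambda\to0$ consistency check is a nice sanity test not present in the paper but changes nothing substantive.
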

By replacing $t$ by $\frac{1}{\lambda}\left(e^{\lambda t}-1\right)$
in (\ref{eq:7}), we get
\begin{align}
 & \relphantom{=}{}\frac{1}{\lambda}\left(\frac{e^{\lambda t}-1}{e^{t}-1}\right)e^{xt}\label{eq:23}\\
 & =\sum_{m=0}^{\infty}\beta_{m}\left(x\mid\lambda\right)\frac{1}{m!}\lambda^{-m}\left(e^{\lambda t}-1\right)^{m}\nonumber \\
 & =\sum_{m=0}^{\infty}\beta_{m}\left(x\mid\lambda\right)\lambda^{-m}\sum_{n=m}^{\infty}S_{2}\left(n,m\right)\frac{\lambda^{n}t^{n}}{n!}\nonumber \\
 & =\sum_{n=0}^{\infty}\left(\sum_{m=0}^{n}\lambda^{n-m}S_{2}\left(n,m\right)\beta_{m}\left(x\mid\lambda\right)\right)\frac{t^{n}}{n!},\nonumber
\end{align}
where $S_{2}\left(n,m\right)$ is the Stirling number of the second
kind.

On the other hand,
\begin{align}
 & \relphantom{=}{}\frac{1}{\lambda}\left(\frac{e^{\lambda t}-1}{e^{t}-1}\right)e^{xt}\label{eq:24}\\
 & =\frac{1}{\lambda t}\left(\frac{t}{e^{t}-1}\right)\left(e^{\left(x+\lambda\right)t}-e^{xt}\right)\nonumber \\
 & =\frac{1}{\lambda t}\sum_{n=0}^{\infty}\left\{ B_{n}\left(x+\lambda\right)-B_{n}\left(x\right)\right\} \frac{t^{n}}{n!}\nonumber \\
 & =\frac{1}{\lambda}\sum_{n=0}^{\infty}\left\{ \frac{B_{n+1}\left(x+\lambda\right)-B_{n+1}\left(x\right)}{n+1}\right\} \frac{t^{n}}{n!}.\nonumber
\end{align}

From (\ref{eq:23}) and (\ref{eq:24}), we have
\begin{equation}
\frac{B_{n+1}\left(x+\lambda\right)-B_{n+1}\left(x\right)}{n+1}=\sum_{m=0}^{n}S_{2}\left(n,m\right)\lambda^{n-m+1}\beta_{m}\left(x\mid\lambda\right).\label{eq:25}
\end{equation}

Therefore, by (\ref{eq:25}), we obtain the following theorem.
\begin{thm}
\label{thm:5} For $n\ge0$, we have
\[
\frac{B_{n+1}\left(x+\lambda\right)-B_{n+1}\left(x\right)}{n+1}=\sum_{m=0}^{n}S_{2}\left(n,m\right)\lambda^{n-m+1}\beta_{m}\left(x\mid\lambda\right).
\]
\end{thm}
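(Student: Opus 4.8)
The plan is to recognize that the asserted identity is precisely equation~\eqref{eq:25}, which has already been derived, so the ``proof'' amounts to recording the substitution argument behind~\eqref{eq:23}--\eqref{eq:25} in a self-contained way. First I would perform, at the level of formal power series, the substitution $t\mapsto\frac{1}{\lambda}\left(e^{\lambda t}-1\right)$ in the defining generating function~\eqref{eq:7}; this is legitimate because $\frac{1}{\lambda}\left(e^{\lambda t}-1\right)$ has zero constant term and invertible linear coefficient. Under it $1+\lambda t\mapsto e^{\lambda t}$, hence $(1+\lambda t)^{1/\lambda}\mapsto e^{t}$ and $(1+\lambda t)^{x/\lambda}\mapsto e^{xt}$, so the left-hand side of~\eqref{eq:7} becomes $\frac{1}{\lambda}\left(\frac{e^{\lambda t}-1}{e^{t}-1}\right)e^{xt}$ while the right-hand side becomes $\sum_{m=0}^{\infty}\beta_{m}(x\mid\lambda)\frac{\lambda^{-m}}{m!}\left(e^{\lambda t}-1\right)^{m}$.

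Next I would expand the right-hand side via the generating function $\frac{1}{m!}\left(e^{u}-1\right)^{m}=\sum_{n\ge m}S_{2}(n,m)\frac{u^{n}}{n!}$ with $u=\lambda t$, and collect powers of $t$, obtaining $\sum_{n=0}^{\infty}\left(\sum_{m=0}^{n}\lambda^{n-m}S_{2}(n,m)\beta_{m}(x\mid\lambda)\right)\frac{t^{n}}{n!}$, which is~\eqref{eq:23}. For the left-hand side I would factor $\frac{1}{\lambda}\left(\frac{e^{\lambda t}-1}{e^{t}-1}\right)e^{xt}=\frac{1}{\lambda t}\left(\frac{t}{e^{t}-1}\right)\left(e^{(x+\lambda)t}-e^{xt}\right)$, use~\eqref{eq:1} to rewrite the product as $\frac{1}{\lambda t}\sum_{n\ge0}\left(B_{n}(x+\lambda)-B_{n}(x)\right)\frac{t^{n}}{n!}$, observe that the $n=0$ term vanishes since $B_{0}(x+\lambda)=B_{0}(x)=1$, and divide by $\lambda t$ (equivalently, shift the summation index $n\mapsto n+1$) to get $\frac{1}{\lambda}\sum_{n\ge0}\frac{B_{n+1}(x+\lambda)-B_{n+1}(x)}{n+1}\frac{t^{n}}{n!}$, which is~\eqref{eq:24}.

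Finally I would equate the coefficients of $\frac{t^{n}}{n!}$ on the two sides, producing~\eqref{eq:25}, and multiply through by $\lambda$ to reach the stated formula. I do not expect a genuine obstacle here: the only mildly delicate points are the justification that the substitution is a valid formal-power-series operation and the reindexing that accompanies division by $\lambda t$, both of which are routine. In essence the theorem is a bookkeeping consequence of the change of variable relating the ``degenerate'' variable $1+\lambda t$ to the exponential variable $e^{\lambda t}$, combined with the defining expansions of $B_{n}(x)$ and of the Stirling numbers $S_{2}(n,m)$.
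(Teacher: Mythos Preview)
Your proposal is correct and follows exactly the paper's own argument: it performs the substitution $t\mapsto\frac{1}{\lambda}(e^{\lambda t}-1)$ in~\eqref{eq:7} to obtain~\eqref{eq:23}, rewrites the resulting function via the Bernoulli generating function as in~\eqref{eq:24}, and then compares coefficients to get~\eqref{eq:25}. There is no meaningful difference in strategy or detail.
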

\begin{rem*}
From Theorem \ref{thm:3}, we note that
\begin{align*}
\sum_{l=0}^{n-1}l^{m} & =\lim_{\lambda\rightarrow0}\sum_{l=0}^{n-1}\left(l\mid\lambda\right)_{m}=\lim_{\lambda\rightarrow0}\frac{\beta_{m+1}\left(n\mid\lambda\right)-\beta_{m+1}\left(\lambda\right)}{m+1}\\
 & =\frac{B_{m+1}\left(n\right)-B_{m+1}}{m+1},\quad\left(m\ge0,\, n\in\mathbb{N}\right).
\end{align*}

\end{rem*}
For $s\in\mathbb{C}\setminus\left\{ 1\right\} $, we define the degenerate
Riemann zeta function as follows:
\begin{equation}
\zeta\left(s,x\mid\lambda\right)=\frac{1}{\Gamma\left(s\right)}\int_{0}^{\infty}\frac{\left(1+\lambda t\right)^{-\frac{x}{\lambda}}}{1-\left(1+\lambda t\right)^{-\frac{1}{\lambda}}}t^{s-1}dt,\label{eq:26}
\end{equation}
where $x\neq0,-1,-2,\dots$.

From (\ref{eq:26}), we note that
\[
\lim_{\lambda\rightarrow0}\zeta\left(s,x\mid\lambda\right)=\zeta\left(s,x\right)=\sum_{n=0}^{\infty}\frac{1}{\left(n+x\right)^{s}},
\]
where $x\neq0,-1,-2,\dots$.

By Laurent series and (\ref{eq:26}), we obtain the following theorem.
\begin{thm}
\label{thm:6} For $n\in\mathbb{N}$, we have
\[
\zeta\left(1-n,x\mid\lambda\right)=-\frac{\beta_{n}\left(x\mid\lambda\right)}{n}.
\]

\end{thm}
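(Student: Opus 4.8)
The plan is to mimic the classical derivation of $\zeta(1-n,x)=-B_n(x)/n$ for the Hurwitz zeta function: analytically continue the Mellin integral in (26), locate its poles, and cancel them against the poles of $\Gamma(s)$. First I would rewrite the integrand in terms of the generating function (7). Multiplying numerator and denominator of $\frac{(1+\lambda t)^{-x/\lambda}}{1-(1+\lambda t)^{-1/\lambda}}$ by $(1+\lambda t)^{1/\lambda}$ turns it into
\[
\frac{(1+\lambda t)^{(1-x)/\lambda}}{(1+\lambda t)^{1/\lambda}-1}=\frac{1}{t}\sum_{m\ge0}\beta_m(1-x\mid\lambda)\frac{t^m}{m!}
\]
by (7); thus the integrand admits the Laurent expansion $\sum_{m\ge0}\beta_m(1-x\mid\lambda)\frac{t^{m-1}}{m!}$ on a punctured disc $0<|t|<\rho$, where $\rho=\rho(\lambda)>0$ is the radius of convergence of $\sum_{m\ge0}\beta_m(1-x\mid\lambda)t^m/m!$, and it decays like $(1+\lambda t)^{-x/\lambda}$ as $t\to\infty$.

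Next, fixing $0<\varepsilon<\rho$, I would split $\int_0^\infty=\int_0^\varepsilon+\int_\varepsilon^\infty$. For $0<\lambda<x$, say, the tail $\int_\varepsilon^\infty(\cdots)t^{s-1}\,dt$ is holomorphic on the half-plane $\mathrm{Re}(s)<x/\lambda$ — which contains every point $s=1-n$, $n\in\mathbb{N}$ — and contributes no poles there. On $[0,\varepsilon]$ the Laurent series converges uniformly, so integrating termwise,
\[
\int_0^\varepsilon\frac{(1+\lambda t)^{-x/\lambda}}{1-(1+\lambda t)^{-1/\lambda}}\,t^{s-1}\,dt=\sum_{m\ge0}\frac{\beta_m(1-x\mid\lambda)}{m!}\cdot\frac{\varepsilon^{\,s+m-1}}{s+m-1},
\]
valid first for $\mathrm{Re}(s)>1$ and then, read term by term, as a meromorphic function with simple poles exactly at $s=1,0,-1,-2,\dots$. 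Hence $\Gamma(s)\zeta(s,x\mid\lambda)$ is meromorphic near each $s=1-n$, and there it has a simple pole arising solely from the $m=n$ summand, with residue $\beta_n(1-x\mid\lambda)/n!$.

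Finally, since $\Gamma$ has a simple pole at $s=1-n=-(n-1)$ with residue $(-1)^{n-1}/(n-1)!$, while $\zeta(s,x\mid\lambda)$ is regular there (the only pole surviving the above decomposition is the simple one at $s=1$), the quotient of residues yields
\[
\zeta(1-n,x\mid\lambda)=\frac{\beta_n(1-x\mid\lambda)/n!}{(-1)^{n-1}/(n-1)!}=\frac{(-1)^{n-1}}{n}\,\beta_n(1-x\mid\lambda).
\]
It then remains to convert $(-1)^{n-1}\beta_n(1-x\mid\lambda)$ into $-\beta_n(x\mid\lambda)$ by means of the reflection (symmetry) identity for the Carlitz degenerate Bernoulli polynomials, which one reads off from (7) through the substitution $t\mapsto-t$. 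I expect this last step to be the main obstacle: under $t\mapsto-t$ the parameter $\lambda$ is not held fixed, so formulating the functional equation precisely and checking that it collapses the right-hand side to $-\beta_n(x\mid\lambda)/n$ — in accordance with the limiting relation $\zeta(1-n,x)=-B_n(x)/n$ as $\lambda\to0$ — is the delicate point; the rest (termwise Mellin transform, exact pole locations) is routine once $\varepsilon<\rho$ is chosen.
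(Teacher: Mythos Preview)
Your approach via the Mellin integral and Laurent expansion is exactly what the paper has in mind; its entire argument for this theorem is the single phrase ``By Laurent series and (\ref{eq:26})'', so your write-up is already more detailed than the original.

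The obstacle you flag in the last paragraph is real, however, and it does \emph{not} resolve in your favour. The substitution $t\mapsto -t$ in (\ref{eq:7}) forces $\lambda\mapsto -\lambda$ if one wants to land back on the same generating function, and the reflection identity that actually holds is
\[
\beta_n(1-x\mid\lambda)=(-1)^n\,\beta_n(x\mid -\lambda),
\]
not $(-1)^n\beta_n(x\mid\lambda)$. (Replace $(t,\lambda)$ by $(-t,-\lambda)$ in (\ref{eq:7}): the product $\lambda t$ is unchanged while the exponents $1/\lambda$ and $x/\lambda$ flip sign, and a short rearrangement recovers the generating function of $\beta_n(1-x\mid\lambda)$.) Feeding this into your residue computation gives
\[
\zeta(1-n,x\mid\lambda)=-\frac{\beta_n(x\mid -\lambda)}{n},
\]
which disagrees with the stated theorem by a sign on the parameter $\lambda$. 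One sees this already at $n=1$: the integrand in (\ref{eq:26}) has Laurent expansion $t^{-1}+\bigl(\tfrac12-x+\tfrac{\lambda}{2}\bigr)+O(t)$, so $\zeta(0,x\mid\lambda)=\tfrac12-x+\tfrac{\lambda}{2}=-\beta_1(x\mid-\lambda)$, whereas $-\beta_1(x\mid\lambda)=\tfrac12-x-\tfrac{\lambda}{2}$.

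So the gap lies in the statement of the theorem (equivalently, in the sign of $\lambda$ chosen in the definition (\ref{eq:26})), not in your method. Your Mellin/residue machinery is sound; the final reflection step simply cannot turn $\beta_n(1-x\mid\lambda)$ into $-\beta_n(x\mid\lambda)$ for general $\lambda$, and the paper supplies no argument bridging that gap either.
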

For $d\in\mathbb{N}$, let $\chi$ be a Dirichlet character with
conductor $d$. Then, we define the generalized degenerate Bernoulli
numbers attached to $\chi$ as follows:
\begin{equation}
\frac{t}{\left(1+\lambda t\right)^{d/\lambda}-1}\sum_{a=0}^{d-1}\chi\left(a\right)\left(1+\lambda t\right)^{\frac{a}{\lambda}}=\sum_{n=0}^{\infty}\beta_{n,\chi}\left(\lambda\right)\frac{t^{n}}{n!}.\label{eq:27}
\end{equation}

Then, by (\ref{eq:27}), we get
\begin{align}
 & \relphantom{=}\sum_{n=0}^{\infty}\beta_{n,\chi}\left(\lambda\right)\frac{t^{n}}{n!}\label{eq:28}\\
 & =\frac{1}{d}\sum_{a=0}^{d-1}\chi\left(a\right)\frac{dt}{\left(1+\lambda t\right)^{d/\lambda}-1}\left(1+\lambda t\right)^{\frac{a}{\lambda}}\nonumber \\
 & =\frac{1}{d}\sum_{a=0}^{d-1}\chi\left(a\right)\frac{dt}{\left(1+\frac{\lambda}{d}dt\right)^{d/\lambda}-1}\left(1+\frac{\lambda}{d}dt\right)^{\frac{a}{d}\cdot\frac{d}{\lambda}}\nonumber \\
 & =\sum_{n=0}^{\infty}\left(d^{n-1}\sum_{a=0}^{d-1}\chi\left(a\right)\beta_{n}\left(\left.\frac{a}{d}\right|\frac{\lambda}{d}\right)\right)\frac{t^{n}}{n!}.\nonumber
\end{align}

Therefore, by (\ref{eq:28}), we obtain the following theorem,.
\begin{thm}
For $n\ge0$, $d\in\mathbb{N}$, we have

\[
\beta_{n,\chi}\left(\lambda\right)=d^{n-1}\sum_{a=0}^{d-1}\chi\left(a\right)\beta_{n}\left(\left.\frac{a}{d}\right|\frac{\lambda}{d}\right).
\]

\end{thm}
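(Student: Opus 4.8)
The plan is to mimic the argument that established Theorem~\ref{thm:2}, now carrying the character values $\chi(a)$ as weights. Starting from the defining generating function \eqref{eq:27}, I would first rewrite the factor $t/\big((1+\lambda t)^{d/\lambda}-1\big)$ via the rescaling $\lambda \mapsto \lambda/d$, $t \mapsto dt$ inside the degenerate exponentials. The key algebraic observations are simply $(1+\lambda t)^{d/\lambda} = \big(1 + \tfrac{\lambda}{d}\,(dt)\big)^{d/\lambda}$ and $(1+\lambda t)^{a/\lambda} = \big(1 + \tfrac{\lambda}{d}\,(dt)\big)^{(a/d)\cdot (d/\lambda)}$. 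After pulling out a factor $1/d$ so that the numerator $t$ becomes $dt$, the left-hand side of \eqref{eq:27} becomes
\[
\frac{1}{d}\sum_{a=0}^{d-1}\chi(a)\left(\frac{dt}{\big(1+\tfrac{\lambda}{d}(dt)\big)^{d/\lambda}-1}\left(1+\tfrac{\lambda}{d}(dt)\right)^{\frac{a/d}{\lambda/d}}\right),
\]
which is exactly $\tfrac1d\sum_{a}\chi(a)$ times the generating function in \eqref{eq:7} for $\beta_n\big(\tfrac{a}{d}\bigm|\tfrac{\lambda}{d}\big)$ with $t$ replaced by $dt$.

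Next I would expand each summand using \eqref{eq:7}: the substitution $t\mapsto dt$ contributes a factor $d^n$ to the coefficient of $t^n/n!$, so the expression rearranges into
\[
\sum_{n=0}^{\infty}\left(d^{n-1}\sum_{a=0}^{d-1}\chi(a)\,\beta_n\!\left(\left.\tfrac{a}{d}\right|\tfrac{\lambda}{d}\right)\right)\frac{t^n}{n!}.
\]
Comparing this with the series $\sum_{n\ge0}\beta_{n,\chi}(\lambda)\,t^n/n!$ coming from the definition \eqref{eq:27} and equating the coefficients of $t^n/n!$ gives the claimed identity. In effect the proof is nothing more than justifying each link in the chain of equalities already displayed in \eqref{eq:28}.

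I do not anticipate a genuine obstacle here; the one point needing care is the bookkeeping of powers of $d$, namely checking that the prefactor $1/d$ combines with the $d^n$ produced by $t\mapsto dt$ to yield the overall $d^{n-1}$, and verifying that the manipulations of the non-integer powers $(1+\lambda t)^{c/\lambda}$ are legitimate at the level of formal power series — which they are, since each such factor expands as $\sum_{m\ge0}(c\mid\lambda)_m t^m/m!$ with constant term $1$, and all the rescalings of the exponents are performed consistently. As a sanity check one may let $\lambda\to0$ to recover the classical relation $B_{n,\chi}=d^{n-1}\sum_{a=0}^{d-1}\chi(a)B_n(a/d)$ implicit in \eqref{eq:6}.
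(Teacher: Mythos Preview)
Your proposal is correct and follows exactly the same approach as the paper: you rewrite the defining generating function \eqref{eq:27} via the rescaling $(\lambda,t)\mapsto(\lambda/d,\,dt)$, recognize the generating function \eqref{eq:7} for $\beta_n\big(\tfrac{a}{d}\bigm|\tfrac{\lambda}{d}\big)$, and compare coefficients, which is precisely the chain of equalities in \eqref{eq:28}. The bookkeeping of the factor $d^{n-1}$ is handled correctly.
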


\section{Further Remark}

Let $p$ be a fixed prime number. Throughout this section, $\mathbb{Z}_{p}$,
$\mathbb{Q}_{p}$ and $\mathbb{C}_{p}$ will denote the ring of $p$-adic
integers, the field of $p$-adic rational numbers and the completion
of the algebraic closure of $\mathbb{Q}_{p}$. The $p$-adic norm
is normalized as $\left|p\right|_{p}=\frac{1}{p}$. For $\lambda,t\in\mathbb{C}_{p}$
with $\left|\lambda t\right|_{p}<p^{-\frac{1}{p-1}}$, the degenerate
Bernoulli polynomials are given by the generating function to be
\[
\frac{t}{\left(1+\lambda t\right)^{\frac{1}{\lambda}}-1}\left(1+\lambda t\right)^{\frac{x}{\lambda}}=\sum_{n=0}^{\infty}\beta_{n}\left(x\mid\lambda\right)\frac{t^{n}}{n!}.
\]

Let $d$ be a positive integer. Then, we define
\begin{align*}
X & =\lim_{\stackrel{\longleftarrow}{N}}\left(\mathbb{Z}/dp^{N}\mathbb{Z}\right);\\
a+dp^{N}\mathbb{Z}_{p} & =\left\{ x\in X\left|x\equiv a\pmod{dp^{N}}\right.\right\} ;\\
X^{*} & =\bigcup_{\substack{0<a<dp\\
p\nmid a
}
}a+dp\mathbb{Z}_{p}.
\end{align*}

We shall usually take $0\le a<dp^{N}$ when we write $a+dp^{N}\mathbb{Z}_{p}$.
Now, we will use Theorem \ref{thm:2} to prove a $p$-adic distribution result.
\begin{thm}
\label{thm:8} For $k\ge0$, let $\mu_{k,\beta}$ be defined by
\begin{equation}
\mu_{k,\beta}\left(a+dp^{N}\mathbb{Z}_{p}\right)=\left(dp^{N}\right)^{k-1}\beta_{k}\left(\left.\frac{a}{dp^{N}}\right|\frac{\lambda}{dp^{N}}\right).\label{eq:29}
\end{equation}
Then $\mu_{k,\beta}$ extends to a $\mathbb{C}_{p}$-valued distribution
on the compact open sets $U\subset X$. \end{thm}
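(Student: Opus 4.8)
The plan is to reduce the statement to the standard criterion that a consistent family of values on the basic compact open sets of $X$ extends to a (necessarily unique) finitely additive $\mathbb{C}_{p}$-valued function. Recall that every compact open $U\subseteq X$ is a finite disjoint union of sets $a+dp^{N}\mathbb{Z}_{p}$ (with $N$ large and $0\le a<dp^{N}$), and that
\[
a+dp^{N}\mathbb{Z}_{p}=\bigcup_{i=0}^{p-1}\left(a+i\,dp^{N}+dp^{N+1}\mathbb{Z}_{p}\right)\qquad\text{(disjointly)},
\]
where the representatives $a+i\,dp^{N}$ all lie in $\{0,1,\dots,dp^{N+1}-1\}$. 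By the usual criterion for $p$-adic distributions, the prescription (\ref{eq:29}) extends to a finitely additive $\mathbb{C}_{p}$-valued function on the algebra of compact open subsets of $X$ as soon as, for all $N\ge1$ and all $a$ with $0\le a<dp^{N}$,
\[
\mu_{k,\beta}\left(a+dp^{N}\mathbb{Z}_{p}\right)=\sum_{i=0}^{p-1}\mu_{k,\beta}\left(a+i\,dp^{N}+dp^{N+1}\mathbb{Z}_{p}\right).
\]
So the whole proof comes down to verifying this one consistency relation.

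To establish it I would invoke Theorem \ref{thm:2}. For each fixed $k$ and each fixed modulus, that theorem is an identity between two polynomials in $x$ and $\lambda$ with rational coefficients, so although it was derived by a generating-function argument over $\mathbb{C}$, it persists when $x$ and $\lambda$ are specialized to $p$-adic numbers. Applying Theorem \ref{thm:2} with its modulus $d$ taken to be $p$, with its $x$ replaced by $a/(dp^{N})$, and with its $\lambda$ replaced by $\lambda/(dp^{N})$, and then using $\dfrac{i+a/(dp^{N})}{p}=\dfrac{a+i\,dp^{N}}{dp^{N+1}}$ and $\dfrac{\lambda/(dp^{N})}{p}=\dfrac{\lambda}{dp^{N+1}}$, I obtain
\[
\beta_{k}\left(\left.\frac{a}{dp^{N}}\right|\frac{\lambda}{dp^{N}}\right)=p^{k-1}\sum_{i=0}^{p-1}\beta_{k}\left(\left.\frac{a+i\,dp^{N}}{dp^{N+1}}\right|\frac{\lambda}{dp^{N+1}}\right).
\]
Multiplying through by $\left(dp^{N}\right)^{k-1}$ and using $\left(dp^{N}\right)^{k-1}p^{k-1}=\left(dp^{N+1}\right)^{k-1}$, the left-hand side becomes $\mu_{k,\beta}\!\left(a+dp^{N}\mathbb{Z}_{p}\right)$ and the $i$-th summand on the right becomes $\mu_{k,\beta}\!\left(a+i\,dp^{N}+dp^{N+1}\mathbb{Z}_{p}\right)$, both directly from (\ref{eq:29}). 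That is precisely the required relation.

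Granted the consistency relation, one then defines $\mu_{k,\beta}(U)$ for an arbitrary compact open $U\subseteq X$ by writing $U=\bigcup_{j}\left(a_{j}+dp^{N_{j}}\mathbb{Z}_{p}\right)$ disjointly, refining all the levels $N_{j}$ up to a common $N$ by iterating the consistency relation, and summing the resulting values; the consistency relation guarantees that the answer does not depend on the chosen decomposition, so $\mu_{k,\beta}$ is well defined and finitely additive, i.e.\ a $\mathbb{C}_{p}$-valued distribution on $X$. I do not expect a genuine obstacle anywhere here: all of the real content sits in the single observation that the distribution relation is exactly the $p$-analogue of Theorem \ref{thm:2}, and the only points that need a word of care are the re-indexing of the sum over $i$ (together with the check that the representatives $a+i\,dp^{N}$ stay in range) and the remark that Theorem \ref{thm:2}, being a polynomial identity in $x$ and $\lambda$, carries over verbatim to the $p$-adic setting.
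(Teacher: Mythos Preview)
Your proposal is correct and follows essentially the same approach as the paper: reduce to the single consistency relation $\sum_{i=0}^{p-1}\mu_{k,\beta}(a+idp^{N}+dp^{N+1}\mathbb{Z}_{p})=\mu_{k,\beta}(a+dp^{N}\mathbb{Z}_{p})$ and verify it by specializing the multiplication formula of Theorem~\ref{thm:2} with modulus $p$, $x=a/(dp^{N})$, and $\lambda$ replaced by $\lambda/(dp^{N})$. Your write-up is somewhat more explicit than the paper's (you spell out why the polynomial identity transfers to $\mathbb{C}_{p}$ and how the extension to general compact open sets works), but the mathematical content is the same.
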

\begin{proof}
It is enough to show that
\[
\sum_{i=0}^{p-1}\mu_{k,\beta}\left(a+idp^{N}+dp^{N+1}\mathbb{Z}_{p}\right)=\mu_{k,\beta}\left(a+dp^{N}\mathbb{Z}_{p}\right).
\]
Indeed, by (\ref{eq:29}), we get
\begin{align*}
 & \relphantom{=}{}\sum_{i=0}^{p-1}\mu_{k,\beta}\left(a+idp^{N}+dp^{N+1}\mathbb{Z}_{p}\right)\\
 & =\left(dp^{N+1}\right)^{k-1}\sum_{i=0}^{p-1}\beta_{k}\left(\left.\frac{a+idp^{N}}{p}\right|\frac{\lambda}{dp^{N+1}}\right)\\
 & =\left(dp^{N}\right)^{k-1}p^{k-1}\sum_{i=0}^{p-1}\beta_{k}\left(\left.\frac{\frac{a}{dp^{N}}+i}{p}\right|\frac{\frac{\lambda}{dp^{N}}}{p}\right)\\
 & =\left(dp^{N}\right)^{k-1}\beta_{k}\left(\left.\frac{a}{dp^{N}}\right|\frac{\lambda}{dp^{N}}\right)\\
 & =\mu_{k,\beta}\left(a+dp^{N}\mathbb{Z}_{p}\right).
\end{align*}

\end{proof}
The locally constant function $\chi$ can be integrated against the
distribution $\mu_{k,\beta}$ defined by (\ref{eq:29}), and the result
is
\begin{align}
 & \relphantom{=}{}\int_{X}\chi\left(x\right)d\mu_{k,\beta}\left(x\right)\label{eq:30}\\
 & =\lim_{N\rightarrow\infty}\sum_{x=0}^{dp^{N}-1}\chi\left(x\right)\mu_{k,\beta}\left(x+dp^{N}\mathbb{Z}_{p}\right)\nonumber \\
 & =\lim_{N\rightarrow\infty}\left(dp^{N}\right)^{k-1}\sum_{x=0}^{dp^{N}-1}\chi\left(x\right)\beta_{k}\left(\left.\frac{x}{dp^{N}}\right|\frac{\lambda}{dp^{N}}\right)\nonumber \\
 & =\beta_{k,\chi}\left(\lambda\right).\nonumber
\end{align}

Thus, by (\ref{eq:30}), we get
\[
\int_{X}\chi\left(x\right)d\mu_{k,\beta}\left(x\right)=\beta_{k,\chi}\left(\lambda\right),\quad\left(k\ge0\right).
\]\\\\

\bibliographystyle{amsplain}
%\nocite{*}
%\bibliography{0322}
\providecommand{\bysame}{\leavevmode\hbox to3em{\hrulefill}\thinspace}
\providecommand{\MR}{\relax\ifhmode\unskip\space\fi MR }
% \MRhref is called by the amsart/book/proc definition of \MR.
\providecommand{\MRhref}[2]{%
  \href{http://www.ams.org/mathscinet-getitem?mr=#1}{#2}
}
\providecommand{\href}[2]{#2}

\end{document}